\newtheorem{theorem}{Theorem}[section]
\newtheorem{lemma}{Lemma}[section]
\newtheorem{proposition}{Proposition}[section]
\begin{document}
\title{Local a Priori Estimate on the General Scale Subdomains\footnote{This work is supported in part
by the National Science Foundation of China (NSFC 11001259, 11371026, 11201501, 11031006,
2011CB309703), the National Center for Mathematics and Interdisciplinary Science,
 CAS and the President Foundation of AMSS-CAS.}}
\author{Xiaole Han\footnote{LSEC, ICMSEC, Academy of Mathematics and Systems
Science, Chinese Academy of Sciences,  Beijing 100190, China
(hanxiaole@lsec.cc.ac.cn)} \ and\ Hehu Xie\footnote{LSEC, ICMSEC,
Academy of Mathematics and Systems Science, Chinese Academy of
Sciences,  Beijing 100190, China (hhxie@lsec.cc.ac.cn)} }
\date{}
\maketitle
\begin{abstract}
The local a priori estimate for the finite element approximation is essential
for underlying the local and parallel technique. It is well known that the constant
coefficients in the inequality is independent of the mesh size. But it is not so
clear whether the constant depends on the scale of the local subdomains. The aim of
this note is to derive a new local a priori estimate on the general scale domains. We also 
 show that the dependence of the constant appearing in the local
a priori estimate on the scale of the subdomains.

\vskip0.3cm {\bf Keywords.} Scaling argument, scale of subdomain, local priori error estimate,
parallel computation.

\vskip0.2cm {\bf AMS subject classifications.} 65N30, 65N25, 65L15,
65B99.
\end{abstract}

\section{Introduction}
Recently, parallel techniques for the finite element computation have become very attractive.
These exists a type of parallel schemes which are based on the understanding of the local
and global properties of a finite element solution
for the elliptic type problems which is proposed in \cite{XuZhLP2G} and then has been 
studied extensively \cite{LiHanXieYouLPMEigen,LinXieMultilevelEigen,XuZhLP2GNL,XuZhLP2GEigen}.
The cornerstone of this technique is the local a priori estimate \cite{Wahlbin,XuZhLP2G},
where the involved constant is independent of the mesh parameters. However, the dependence of
 the constant on the scale of the subdomains is not so clear. If we want to consider the sharp 
 effects caused by local subdomain ($\Omega_0$), it is necessary to clarify the dependence of 
 the coefficient on the diameter of $\Omega_0$ (denoted by $d_{\Omega_0}$).
For example, in \cite{LiHanXieYouLPMEigen,XuZhLP2GNL,XuZhLP2GEigen}, we need to know
how large of the subdomain to construct the efficient parallel method. 
In this paper, we explicitly show the dependence of the local priori error estimate on the 
subdomain scale $d_{\Omega_0}$.

The rest of this paper is organized as follow. In the next section, some notation,
assumptions and basic results are listed.
In Section 3, a local estimate of the finite element solution 
is derived on the general scale domain and the dependence of the local 
estimates on the subdomain scale is clarified. 
Some concluding remarks are given in the last section.

\section{Preliminaries}
In this section, following \cite{XuZhLP2G}, we firstly state the model problem and list
some basic notations and results. Then we set some reasonable assumptions on the
finite element spaces and show their reasonability.

\subsection{Model problem}
Let $\Omega$ be a bounded domain in $\mathcal{R}^d$ $(d\ge1)$. We shall use standard
notation for Sobolev spaces $W^{s,p}(\Omega)$ and their
associated norms and seminorms \cite{Adams}. For $p=2$, we denote
$H^s(\Omega)=W^{s,2}(\Omega)$ and $H_0^1(\Omega)=\{v\in H^1(\Omega):\ v|_{\partial\Omega}=0\}$,
where $v|_{\partial\Omega}=0$ is in the sense of trace, and $\|\cdot\|_{s,\Omega}=\|\cdot\|_{s,2,\Omega}$.
In some places, $\|\cdot\|_{s,2,\Omega}$ should be viewed as piecewise defined if it is necessary.
For $D\subset G\subset\Omega$, the notation $D\subset\subset G$ to means that
${\rm dist}(\partial D\setminus\partial\Omega,\partial G\setminus\partial\Omega)>0$.
Note that any $w\in H_0^1(\Omega_0)$ can be naturally extended to be a function
in $H_0^1(\Omega)$ with zero
outside of $\Omega_0$. Thus we will state this fact by the abused notation
$H_0^1(\Omega_0)\subset H_0^1(\Omega)$.

In this paper, we mainly consider the following second order elliptic problem:
\begin{equation}\label{Model_Problem}
\left\{
\begin{array}{rcl}
\mathcal{L}u & = & f, \ \ \rm{in}\ \Omega, \\
u & = & 0, \ \ \rm{on}\ \partial\Omega.
\end{array}
\right.
\end{equation}
Here $\mathcal{L}$ is a general linear second order elliptic operator:
\begin{equation*}
\mathcal{L}u = -\sum_{i,j=1}^d \frac{\partial}{\partial x_j}\Big(a_{ij}\frac{\partial u}{\partial x_i}\Big)
+ \sum_{i=1}^d b_i \frac{\partial u}{\partial x_i} + \phi u
\end{equation*}
with $a_{ij}$, $b_i \in W^{1,\infty}(\Omega)$, $0\leq \phi \in L^{\infty}(\Omega)$
 and the matrix $(a_{ij})_{1\leq i,j\leq d}$ being
uniformly positive definite on $\Omega$.

The weak form of (\ref{Model_Problem}) is as follows:

Find $u\equiv \mathcal{L}^{-1}f \in H_0^1(\Omega)$ such that
\begin{equation}\label{Weak_Form_Global}
a(u,v) = (f,v), \quad \forall v \in H_0^1(\Omega),
\end{equation}
where $(\cdot,\cdot)$ is the standard inner-product of $L^2(\Omega)$ and
\begin{equation*}
a(u,v) = a_0(u,v) + N(u,v)
\end{equation*}
with
\begin{equation}\label{Bilinear_Form}
a_0(u,v) = \int_{\Omega}\sum_{i,j=1}^d a_{ij}\frac{\partial u}{\partial x_i}\frac{\partial v}{\partial x_j}d\Omega\ \
{\rm{and}} \ \  N(u,v) = \int_{\Omega}\Big(\sum_{i=1}^d b_i \frac{\partial u}{\partial x_i}v + \phi uv\Big)d\Omega.
\end{equation}
In this paper, we assume there exists constants $C$ independent of $\Omega$ such that the follow inequalities hold
\begin{equation}\label{positive_definite_A}
\|w\|_{1,\Omega} \le C a_0(w,w), \ \ \ \forall w \in H_0^1(\Omega),
\end{equation}
and
\begin{equation}\label{Bounded_Operator}
a_0(u,v) \le C \|u\|_{1,\Omega} \|v\|_{1,\Omega},
\ \ N(u,v) \le C \|u\|_{0,\Omega}\|v\|_{1,\Omega}, \ \ \ \forall u,v \in H_0^1(\Omega).
\end{equation}

In order to define higher derivatives of functions with multi variables,
we introduce the following multi-index
$\alpha = (\alpha_1,\cdots,\alpha_d)$ and the corresponding differential operator:
\begin{equation}\label{Multi_Diff}
D^{\alpha} = \partial^{\alpha_1}_{x_1}\partial^{\alpha_2}_{x_2}\cdots\partial^{\alpha_d}_{x_d}.
\end{equation}
Furthermore, we say $\alpha \le \beta$ if and only if $\alpha_i \le \beta_i$, $i = 1,\cdots, d$.
And when $\alpha \ge \beta$, we denote $\alpha - \beta = (\alpha_1 - \beta_1, \cdots, \alpha_d - \beta_d)$.
For derivative of the product of two functions, we have
\begin{equation}\label{r_diff_product}
D^{\alpha}(fg) = \sum_{i=0}^{|\alpha|}\sum_{|\beta|=i,\beta+\gamma=\alpha} D^{\beta}f D^{\gamma}g,
\end{equation}
where $|\alpha|=\alpha_1+\cdots+\alpha_d$.

\subsection{Some assumptions on the finite element spaces}
Following \cite{XuZhLP2G}, we present some assumptions on the finite element spaces and then
define the corresponding finite element approximation for the problem
(\ref{Weak_Form_Global}).

First we generate a shape-regular
decomposition $\mathcal{T}_h(\Omega)$ for the computing domain $\Omega\subset \mathcal{R}^d\
(d=2,3)$ into triangles or rectangles for $d=2$ (tetrahedrons or
hexahedrons for $d=3$). The diameter of a cell $K\in\mathcal{T}_h(\Omega)$
is denoted by $h_K$. The mesh size function is denoted by $h(x)$ whose value
is the diameter $h_K$ of the element $K$ including $x$.

Now, we state the following assumption for the mesh considered in this paper:\\
{\it {\bf A.0}.\
There exists $\gamma>1$ such that
\begin{eqnarray}\label{Mesh_Size_Condition}
h_{\Omega}^{\gamma}\le C h(x),\ \ \ \ \forall x\in\Omega,
\end{eqnarray}
where $h_{\Omega}=\max_{x\in\Omega}h(x)$ is the
largest mesh size of $\mathcal{T}_h(\Omega)$ and $C$ is a constant independent of $\Omega$ and $h(x)$.
}

Based on the triangulation $\mathcal{T}_h(\Omega)$, we define the finite element space
$S_h(\Omega)\subset H^1(\Omega)$ and $S_h^0(\Omega) = S_h(\Omega) \cap H_0^1(\Omega)$.
Given $G\subset \Omega$, we use $S_h(G)$ and $\mathcal{T}_h(G)$ to denote the restriction
of $S_h(\Omega)$ and $\mathcal{T}_h(\Omega)$ to $G$, respectively, and define
\begin{eqnarray}\label{FEM_G}
S_h^0(G)=\big\{v\in S_h(\Omega):\ {\rm supp}\ v\subset\subset G\big\}.
\end{eqnarray}
For any concerned subdomain $G\subset \Omega$ in this paper, we assume that it aligns with the
partition $\mathcal{T}_h(\Omega)$.

Now, we would like to state some assumptions on the finite element space.
The constants $C$ appeared here and after are independent of the scale of $\Omega$ and mesh parameters.\\
{\it {\bf A.1}.\ (Approximation).\
For any $w\in H_0^1(\Omega)$, we have
\begin{eqnarray}\label{Approximation}
\inf_{v\in S_{h}^0(\Omega)}\big(\|h^{-1}(w-v)\|_{0,\Omega}+\|w-v\|_{1,\Omega}\big)=o(1),
\end{eqnarray}
as $h_{\Omega}\rightarrow 0$.
}\\
{\it
{\bf A.1'}.\ (Approximation).\ There exists $r\geq 1$ such that for any $w\in H_0^1(\Omega)$,
\begin{eqnarray}\label{Approximation_II}
\inf_{v\in S_{h}^0(\Omega)}\big(h_{\Omega}^{-1}\|w-v\|_{0,\Omega}+\|w-v\|_{1,\Omega}\big)\le C h_{\Omega}^s\|w\|_{1+s,\Omega},
\ \ \ 0\leq s\leq r.
\end{eqnarray}
}\\
{\it
{\bf A.2}.\ (Inverse Estimate).\ For any $v\in S_h(\Omega_0)$,
\begin{eqnarray}\label{Inverse_Estimate}
\|v\|_{1,\Omega_0}\le C h_{\Omega}^{-1}\|v\|_{0,\Omega_0}.
\end{eqnarray}
}\\
{\it
{\bf A.3}.\ (Superapproximation).\ For $G\subset \Omega_0$, let $\omega\in C_0^{\infty}(\Omega)$ with ${\rm supp}\  \omega\subset\subset G$. Then for any $w\in S_h(G)$, there is $v\in S_{h}^0(G)$ such that
\begin{eqnarray}\label{Superapproximation}
\|\omega w-v\|_{1,G}\le C h_G\|w\|_{1,G}.
\end{eqnarray}}
To show the reasonability of the above assumptions,
we state the normal Lagrange finite element spaces which satisfies the above assumptions, i.e.,
\begin{equation}\label{Lagrange_r}
S_h(\Omega) = \big\{ v \in C(\bar{\Omega}): v|_K \in P_r(K),\ \forall K\in \mathcal{T}_h(\Omega) \big\},
\end{equation}
where $\mathcal{P}_r(K)$ denote the space of polynomials of degree not greater
than the positive integer $r$.

Now, we can to investigate the new versions of Assumptions {\bf A.1}, {\bf A.1'}, {\bf A.2} and {\bf A.3}
on the general subdomain scales.
For this aim, we need to introduce the affine mapping which transforms the general domain $\Omega_0$ to the
reference domain $\widehat{\Omega}_0$ with size $1$.
The affine mapping can be defined as follows:
\begin{equation}\label{transform}
F:\ \Omega_0 \rightarrow \widehat{\Omega}_0,\ \ \ \ x \rightarrow \xi := \frac{x-x_0}{d_{\Omega_0}},
\end{equation}
where $x_0$ is any inner point of $\Omega_0$. Through this map, $K$ and $\mathcal{T}_h(\Omega_0)$ are transformed to $\widehat{K}$ and $\widehat{\mathcal{T}}_h(\widehat{\Omega}_0)$, respectively.
It is obvious that
\begin{eqnarray*}
\frac{h_{\Omega_0}}{d_{\Omega_0}} = \frac{h_{\widehat{\Omega}_0}}{d_{\widehat{\Omega}_0}}.
\end{eqnarray*}
We define $\widehat{u}(\xi) = u(x)$ with $\xi=\frac{x-x_{0}}{d_{\Omega_0}}$ for $u(x)$ with $x\in \Omega_0$.
Then it is naturally that $\widehat{uv} = \hat{u}\hat{v}$. Similarly to (\ref{Multi_Diff}), we also define
\begin{equation}\label{Multi_Diff_Hat}
\widehat{D}^{\alpha} = \partial^{\alpha_1}_{\xi_1}\partial^{\alpha_2}_{\xi_2}\cdots\partial^{\alpha_d}_{\xi_d}.
\end{equation}
 It is easy to derive that $\widehat{D}^{\alpha}\widehat{u}(\xi) = d_{\Omega_0}^{|\alpha|}D^{\alpha}u(x)$ and
 $D^{\alpha}u(x) = d_{\Omega_0}^{-|\alpha|}\widehat{D}^{\alpha}\widehat{u}(\xi)$. Then we can define the corresponding $\widehat{S}_h(\widehat{\Omega}_0)$ which can be viewed as the transformation of $S_h(\Omega_0)$ through the map (\ref{transform}).

\begin{proposition}\label{New_A3}
If we take $S_h(\Omega)$ as in (\ref{Lagrange_r}), then assumptions A.1, A.1' and A.2 hold.
Assumption A.3 should be changed to the following version:

{\it
{\bf A.3}.\ (Superapproximation).\
\ For $G\subset \Omega_0$, let $\omega\in C_0^{\infty}(\Omega)$ with ${\rm supp}\  \omega\subset\subset G$. Then for any $w\in S_h(G)$, there is $v\in S_{h}^0(G)$ such that
\begin{eqnarray}\label{Superapproximation_New}
\|\omega w-v\|_{1,G}\le C d_{\Omega_0}^{-1}\Big(\frac{h_{\Omega_0}}{d_{\Omega_0}}\Big)^{r}\|w\|_{0,\Omega_0}
+C\frac{h_{\Omega_0}}{d_{\Omega_0}}\|w\|_{1,\Omega_0}.
\end{eqnarray}}
\end{proposition}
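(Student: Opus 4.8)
The plan is to reduce the estimate on the general-scale domain $\Omega_0$ to the classical superapproximation estimate on the reference domain $\widehat\Omega_0$ of size $1$ via the affine map $F$ in (\ref{transform}), and then to track precisely how the scaling factors $d_{\Omega_0}$ enter when we transform the norms back. First I would set $\widehat G = F(G)$, $\widehat\omega(\xi) = \omega(x)$, $\widehat w = \widehat{w}(\xi) \in \widehat S_h(\widehat G)$, and note that since $\omega \in C_0^\infty(\Omega)$ with ${\rm supp}\,\omega \subset\subset G$, the transformed cutoff $\widehat\omega$ lies in $C_0^\infty(\widehat\Omega)$ with ${\rm supp}\,\widehat\omega \subset\subset \widehat G$; moreover $\widehat G \subset \widehat\Omega_0 = F(\Omega_0)$ which has diameter $1$. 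On $\widehat G$ we may therefore invoke the classical superapproximation property of Lagrange elements (Assumption A.3 in its original form (\ref{Superapproximation}) applied on the \emph{unit-scale} domain): there exists $\widehat v \in \widehat S_h^0(\widehat G)$ with
\begin{equation*}
\|\widehat\omega\,\widehat w - \widehat v\|_{1,\widehat G} \le C\, h_{\widehat G}\,\|\widehat w\|_{1,\widehat G}.
\end{equation*}
Here it is essential that the constant $C$ is scale-free, which holds because $\widehat\Omega_0$ has size $1$ and the mesh $\widehat{\mathcal T}_h(\widehat\Omega_0)$ is shape-regular; I would also use that the degree-$r$ Lagrange interpolant of $\widehat\omega\,\widehat w$ is exactly the natural candidate for $\widehat v$ and keeps the full polynomial order $r$ available, which is what produces the exponent $r$ in (\ref{Superapproximation_New}) rather than just the exponent $1$.

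Next I would pull $\widehat v$ back to $G$ by setting $v(x) = \widehat v(\xi)$, so that $v \in S_h^0(G)$, and translate the $H^1$-seminorm and $L^2$-norm under $F$. Using $\widehat D^\alpha \widehat u(\xi) = d_{\Omega_0}^{|\alpha|} D^\alpha u(x)$ together with the Jacobian $d\xi = d_{\Omega_0}^{-d}\,dx$ of the map, one gets the standard scaling relations $\|\nabla_\xi \widehat u\|_{0,\widehat G}^2 = d_{\Omega_0}^{2-d}\|\nabla_x u\|_{0,G}^2$ and $\|\widehat u\|_{0,\widehat G}^2 = d_{\Omega_0}^{-d}\|u\|_{0,G}^2$, and similarly with $G$ replaced by $\Omega_0$. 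Applying these to $\widehat\omega\,\widehat w - \widehat v = \widehat{\omega w - v}$ on the left and to $\widehat w$ on the right, and sharpening the right-hand side by keeping the full $H^1$ seminorm and $L^2$ norm separately (i.e.\ using the sharper bound $\|\widehat\omega\widehat w - \widehat v\|_{1,\widehat G} \le C h_{\widehat G}(\|\widehat w\|_{1,\widehat G} + \cdots)$ as it actually comes out of the interpolation estimate), and finally substituting $h_{\widehat G} = h_{\widehat\Omega_0} = h_{\Omega_0}/d_{\Omega_0}$, I expect the two terms $C d_{\Omega_0}^{-1}(h_{\Omega_0}/d_{\Omega_0})^r\|w\|_{0,\Omega_0}$ and $C (h_{\Omega_0}/d_{\Omega_0})\|w\|_{1,\Omega_0}$ to drop out after collecting powers of $d_{\Omega_0}$; the factor $d_{\Omega_0}^{-1}$ in the first term is precisely the derivative-scaling mismatch between the order-$1$ norm on the left and the order-$0$ norm on the right.

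The verifications that A.1, A.1$'$ and A.2 hold for the Lagrange space (\ref{Lagrange_r}) are standard: A.1$'$ is the usual Bramble--Hilbert/Céa interpolation estimate for degree-$r$ elements (which in turn gives A.1 by density, using that $h_\Omega \le h(x)$ pointwise trivially in the $\|h^{-1}\cdot\|_{0,\Omega}$ term, or A.0 if one wants the mesh-size-function version), and A.2 is the classical inverse inequality on a shape-regular mesh; I would dispatch these in a sentence or two each, since the real content of the proposition is the new form (\ref{Superapproximation_New}) of A.3. The main obstacle I anticipate is bookkeeping rather than conceptual: making sure the interpolation estimate on $\widehat G$ is stated in the \emph{sharp} split form that yields an $h_{\widehat G}^r$-type gain against $\|\widehat w\|_{0}$ and only an $h_{\widehat G}^1$ gain against $\|\widehat w\|_{1}$ (this asymmetry is exactly what A.3's classical proof produces, because $\widehat\omega\,\widehat w$ restricted to each element differs from a polynomial only through the variation of $\widehat\omega$, whose higher derivatives are $O(1)$ on the unit-scale domain), and then not dropping or misplacing a power of $d_{\Omega_0}$ when converting every norm back through $F$. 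I would organize the power-counting in a short table of the three scaling identities before assembling the final inequality, to keep the exponents $r$, $1$, $-1$ transparent.
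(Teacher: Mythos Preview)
Your proposal is correct and follows essentially the same route as the paper: transform to the unit-scale reference domain $\widehat\Omega_0$ via the affine map $F$, apply the interpolation estimate together with the Leibniz expansion (using that $\widehat w|_{\widehat K}\in\mathcal P_r$ so the top-order derivative term drops) to obtain the sharp split bound $|\widehat\omega\widehat w-\widehat v|_{1,\widehat G}\lesssim h_{\widehat G}^{\,r}\|\widehat w\|_{0}+h_{\widehat G}\,|\widehat w|_{1}$, and then scale each norm back through $F$, finishing the $L^2$ part of the $H^1$ norm by Poincar\'e. The only organizational difference is that the paper carries out the Leibniz step on $\widehat\Omega_0$ (see (\ref{r_Diff_Hat})) but postpones the inverse inequality until \emph{after} transforming back to $\Omega_0$ (see (\ref{super_1_seminorm})), whereas you propose to do both on the reference domain before scaling; since inverse estimates are scale-invariant this is immaterial.
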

\begin{proof}
First, it is obvious that the space $S_h(\Omega)$ satisfies Assumptions {\bf A.1}, {\bf A.1'}, {\bf A.2} (c.f. \cite{Ciarlet}).
Here we mainly concern the proof of Assumption {\bf A.3}. From $\omega \in C_0^{\infty}(\Omega_0)$, $\widehat{\omega} \in C_0^{\infty}({\widehat{\Omega}}_0)$, Assumption {\bf A.1'} and polynomial interpolation theory in \cite{Ciarlet}, there exist
$\widehat{v} \in S_h^0(\widehat{\Omega}_0)$  such that
\begin{eqnarray}\label{Interpolation_Hat}
|\widehat{\omega}\widehat{w} - \widehat{v}|_{1,\widehat{\Omega}_0}^2
&\le& Ch_{\widehat{\Omega}_0}^{2r}\left(\sum_{\widehat{K}\in\widehat{T}_h(\widehat{\Omega}_0)}
|\widehat{\omega}\widehat{w}|_{1+r,\widehat{K}}^2\right),\\
\|\widehat{\omega}\widehat{w} - \widehat{v}\|_{0,\widehat{\Omega}_0}^2
 &\le& Ch_{\widehat{\Omega}_0}^{2+2r}\left(\sum_{\widehat{K}\in\widehat{\mathcal{T}}_h(\widehat{\Omega}_0)}
|\widehat{\omega}\widehat{w}|_{1+r,\widehat{K}}^2\right),
\end{eqnarray}
where $C$ is a constant independent of $\widehat{\Omega}_0$. Frome Leibnitz formula
(\ref{r_diff_product}) and $\widehat{w}|_{\widehat{K}} \in \widehat{P}^r(\widehat{K})$ on any element $\widehat{K} \in \widehat{\mathcal{T}}_h(\widehat{\Omega}_0)$, the following inequalities hold
\begin{eqnarray}\label{r_Diff_Hat}
&&\sum_{\widehat{K}\in \widehat{\mathcal{T}}_h(\widehat{\Omega}_0)}
|\widehat{\omega}\widehat{w}|_{1+r,\widehat{K}}^2 = \sum_{\widehat{K}\in
\widehat{\mathcal{T}}_h(\widehat{\Omega}_0)}\int_{\widehat{K}} \sum_{|\alpha|=1+r}
|\widehat{D}^{\alpha}(\widehat{\omega}\widehat{w})|^2 \ d\widehat{K} \nonumber \\
&=& \sum_{\widehat{K}\in \widehat{\mathcal{T}}_h(\widehat{\Omega}_0)}
 \int_{\widehat{K}} \sum_{|\alpha|=1+r}
  \Big|\sum_{i=0}^{1+r}\sum_{|\beta|=i,\beta \le
   \alpha, \gamma = \alpha-\beta} \widehat{D}^{\beta}\widehat{w} \widehat{D}^{\gamma}\widehat{\omega}\Big|^2 d\widehat{K}\nonumber\\
&=& \sum_{\widehat{K}\in \widehat{\mathcal{T}}_h(\widehat{\Omega}_0)} \int_{\widehat{K}} \sum_{|\alpha|=1+r} \Big|\sum_{i=0}^{r}\sum_{|\beta|=i,\beta \le \alpha, \gamma = \alpha-\beta} \widehat{D}^{\beta}\widehat{w} \widehat{D}^{\gamma}\widehat{\omega}\Big|^2 d\widehat{K} \nonumber\\
&\le& \sum_{\widehat{K}\in \widehat{\mathcal{T}}_h(\widehat{\Omega}_0)} \int_{\widehat{K}} \sum_{|\alpha|=1+r} C_{r,d} \sum_{i=0}^{r}
\sum_{|\beta|=i,\beta \le \alpha, \gamma = \alpha-\beta}
|\widehat{D}^{\beta}\widehat{w}|^2 |\widehat{D}^{\gamma}\widehat{\omega}|^2 d\widehat{K}\nonumber\\
&=&  \sum_{\widehat{K}\in \widehat{\mathcal{T}}_h(\widehat{\Omega}_0)} \int_{\widehat{K}} C_{r,d} \sum_{i=0}^{r}\sum_{|\beta|=i} |\widehat{D}^{\beta}\widehat{w}|^2 \sum_{|\gamma + \beta|=1+r} |\widehat{D}^{\gamma}\widehat{\omega}|^2 \ d\widehat{K}\nonumber \\
&\le& C_{r,d} \sum_{\widehat{K}\in \widehat{\mathcal{T}}_h(\widehat{\Omega}_0)} \int_{\widehat{K}} \sum_{i=0}^{r}\sum_{|\beta|=i} |\widehat{D}^{\beta}\widehat{w}|^2  d\widehat{K}.
\end{eqnarray}
We take $v(x) = \widehat{v}(\xi)$ with $\xi = \frac{x-x_0}{d_{\Omega_0}}$ and claim that $v$ is
the desired function in Assumption {\bf A.3}.
In fact, by changing variables and combing (\ref{Interpolation_Hat}) and (\ref{r_Diff_Hat}), we have
\begin{eqnarray}\label{break_11_seminorm}
&&|\omega w - v|_{1,\Omega_0}^2 = \int_{\Omega_0}|\nabla(\omega w - v)|^2 \
d\Omega_0\nonumber \\
&=& d_{\Omega_0}^{-2} \frac{|\Omega_0|}{|\widehat{\Omega}_0|} \int_{\widehat{\Omega}_0}|\widehat{\nabla}(\widehat{\omega w} - \widehat{v})|^2 \ d\widehat{\Omega}_0
\le C d_{\Omega_0}^{d-2} h_{\widehat{\Omega}_0}^{2r} \sum_{\widehat{K}\in \widehat{\mathcal{T}}_h(\widehat{\Omega}_0)}
|\widehat{\omega}\widehat{w}|_{1+r,\widehat{K}}^2\nonumber \\
&\le& C d_{\Omega_0}^{d-2}\Big(\frac{h_{\Omega_0}}{d_{\Omega_0}}d_{\widehat{\Omega}_0}\Big)^{2r}  C_{r,d} \sum_{\widehat{K}\in \widehat{\mathcal{T}}_h(\widehat{\Omega}_0)} \int_{\widehat{K}} \sum_{i=0}^{r}\sum_{|\beta|=i} |\widehat{D}^{\beta}\widehat{w}|^2 d\widehat{K} \nonumber \\
&\le& C d_{\Omega_0}^{d-2}\Big(\frac{h_{\Omega_0}}{d_{\Omega_0}}d_{\widehat{\Omega}_0}\Big)^{2r}  C_{r,d} \sum_{\widehat{K}\in \widehat{\mathcal{T}}_h(\widehat{\Omega}_0)} \frac{|\widehat{K}|}{|K|} \int_{K} \sum_{i=0}^{r}\sum_{|\beta|=i}d_{\Omega_0}^{2i} |D^{\beta}w|^2 d\widehat{K} \nonumber \\
&\le& C d_{\Omega_0}^{d-2-2r}h_{\Omega_0}^{2r} C_{r,d}\max_{K \in \mathcal{T}_h(\Omega_0)}\frac{|\widehat{K}|}{|K|} \sum_{K \in \mathcal{T}_h(\Omega_0)} \int_{K} \sum_{i=0}^{r}\sum_{|\beta|=i}d_{\Omega_0}^{2i} |D^{\beta}w|^2
d\widehat{K}\nonumber\\
&\le& C d_{\Omega_0}^{-2-2r}h_{\Omega_0}^{2r} C_{r,d}
\sum_{K \in \mathcal{T}_h(\Omega_0)}
\sum_{i=0}^{r}d_{\Omega_0}^{2i}\left(\sum_{|\beta|=i} \|D^{\beta}w\|_{0,K}^2\right).
\end{eqnarray}
Together with the inverse inequality, (\ref{break_11_seminorm}) can be reduced to
\begin{eqnarray}\label{super_1_seminorm}
|\omega w - v|_{1,\Omega_0}^2 &\le& C d_{\Omega_0}^{-2-2r}h_{\Omega_0}^{2r} C_{r,d}
\sum_{K \in \mathcal{T}_h(\Omega_0)}
\Big(\|w\|_{0,K}^2 + d_{\Omega_0}^{2} |w|_{1,K}^2 + d_{\Omega_0}^{4}h_{\Omega_0}^{-2}
|w|_{1,K}^2 \nonumber\\
&&\ \ \ \ \quad\quad\quad\quad\quad\quad\quad
+ \cdots + d_{\Omega_0}^{2r}h_{\Omega_0}^{-2(r-1)} |w|_{1,K}^2 \Big) \nonumber\\
&\le& C d_{\Omega_0}^{-2-2r}h_{\Omega_0}^{2r} C_{r,d}
\sum_{K \in \mathcal{T}_h(\Omega_0)}|w|_{1,K}^2
\Big(d_{\Omega_0}^{2} + d_{\Omega_0}^{4}h_{\Omega_0}^{-2} + \cdots \nonumber\\
&&\quad\quad\quad\quad\quad  + d_{\Omega_0}^{2r}h_{\Omega_0}^{-2(r-1)} \Big)
+C C_{r,d} d_{\Omega_0}^{-2}\Big(\frac{h_{\Omega_0}}{d_{\Omega_0}}\Big)^{2r}\|w\|_{0,\Omega_0}^2 \nonumber\\
&\le& C C_{r,d} \Big(\frac{h_{\Omega_0}}{d_{\Omega_0}}\Big)^{2}
\|w\|_{1,\Omega_0}^2
\Big(\frac{h_{\Omega_0}^{2r-2}}{d_{\Omega_0}^{2r-2}} + \frac{h_{\Omega_0}^{2r-4}}{d_{\Omega_0}^{2r-4}} + \cdots + 1 \Big)\nonumber\\
&&\quad\quad\quad\quad +C C_{r,d} d_{\Omega_0}^{-2}\Big(\frac{h_{\Omega_0}}{d_{\Omega_0}}\Big)^{2r}\|w\|_{0,\Omega_0}^2\nonumber\\
&\le&CC_{r,d} d_{\Omega_0}^{-2}\Big(\frac{h_{\Omega_0}}{d_{\Omega_0}}\Big)^{2r}\|w\|_{0,\Omega_0}^2
+C C_{r,d} \Big(\frac{h_{\Omega_0}}{d_{\Omega_0}}\Big)^{2}\|w\|_{1,\Omega_0}^2.
\end{eqnarray}
From Poincar\'{e} inequality, we have
\begin{eqnarray}\label{super_0_norm}
\|\omega w - v\|_{0,\Omega_0} &\leq& Cd_{\Omega_0}|\omega w - v|_{1,\Omega_0}.
\end{eqnarray}
Then combining (\ref{super_1_seminorm}) and (\ref{super_0_norm}),
we obtain the desired result (\ref{Superapproximation_New}) and the proof is complete.
\end{proof}

\section{Local a priori estimate}

In this section, we derive a new local a priori estimate which is
dependent on the subdomain scale and is different from the one in \cite{XuZhLP2G}
where the local a priori
estimate are provided for the case with the subdomain scale being $\mathcal{O}(1)$.
The local estimate here is for the general subdomain scales.

 The following lemma is the same as \cite[Lemma 3.1]{XuZhLP2G}. But here we need to prove it for
 the general scale subdomains $\Omega_0$.
\begin{lemma}\label{Technique_Lem}
Let $D \subset\subset \Omega_0$, and let $\omega \in C_0^{\infty}(\Omega)$ be such that
$supp\ \omega \subset\subset \Omega_0$. Then
\begin{equation}\label{Technique}
a_0(\omega w, \omega w) \le 2a(w, \omega^2 w) + C\|w\|_{0,\Omega_0}^2, \ \ \forall w \in H_0^1(\Omega).
\end{equation}
\end{lemma}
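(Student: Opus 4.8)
The plan is to connect $a_0(\omega w,\omega w)$ with $a_0(w,\omega^2 w)$ by the Leibniz rule, and then to use the factor $2$ on the right-hand side, together with the uniform coercivity of $a_0$, to absorb everything else into $C\|w\|_{0,\Omega_0}^2$. Because $\mathrm{supp}\,\omega\subset\subset\Omega_0$, every integrand below is supported in $\Omega_0$, so the argument is purely local: the diameter of $\Omega_0$ does not enter the structure of the proof, only the size of the constant (through $\|\nabla\omega\|_{L^\infty}$), and the proof is in essence that of \cite[Lemma 3.1]{XuZhLP2G}, repeated here for completeness on the general subdomain. Throughout, $\lambda>0$ denotes the uniform ellipticity constant of $(a_{ij})$, and every $C$ is allowed to depend on $\lambda$, on the $L^\infty$ bounds of $a_{ij}$ and $b_i$, and on $\|\nabla\omega\|_{L^\infty(\Omega)}$, but not on $w$ or on $h$.

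First I would substitute $\partial_i(\omega w)=\omega\,\partial_i w+w\,\partial_i\omega$ into $a_0(\omega w,\omega w)$ and $\partial_j(\omega^2 w)=\omega^2\,\partial_j w+2\omega w\,\partial_j\omega$ into $a_0(w,\omega^2 w)$; after subtraction the leading terms $\omega^2\,\partial_i w\,\partial_j w$ cancel, leaving
\[
a_0(\omega w,\omega w)=a_0(w,\omega^2 w)+\int_{\Omega}\sum_{i,j=1}^{d}a_{ij}\Big(w^2\,\partial_i\omega\,\partial_j\omega+\omega w\big(\partial_i\omega\,\partial_j w-\partial_i w\,\partial_j\omega\big)\Big)\,d\Omega .
\]
(If $(a_{ij})$ is symmetric the last bracket drops out after relabelling $i\leftrightarrow j$; in general it does no harm.) The term with $w^2\,\partial_i\omega\,\partial_j\omega$ is at once bounded by $C\|w\|_{0,\Omega_0}^2$, while the mixed term is controlled, via Cauchy--Schwarz and Young's inequality applied to the pairing $(\omega\,\partial_j w)\,(w\,\partial_i\omega)$, by $\tfrac{\lambda}{4}\|\omega\nabla w\|_{0,\Omega_0}^2+C\|w\|_{0,\Omega_0}^2$. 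Hence $a_0(\omega w,\omega w)=a_0(w,\omega^2 w)+R$ with $R\le\tfrac{\lambda}{4}\|\omega\nabla w\|_{0,\Omega_0}^2+C\|w\|_{0,\Omega_0}^2$.

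Next, writing $a(w,\omega^2 w)=a_0(w,\omega^2 w)+N(w,\omega^2 w)$ one gets $2a(w,\omega^2 w)-a_0(\omega w,\omega w)=a_0(w,\omega^2 w)+2N(w,\omega^2 w)-R$, so the claim follows once $a_0(w,\omega^2 w)+2N(w,\omega^2 w)\ge\tfrac{\lambda}{4}\|\omega\nabla w\|_{0,\Omega_0}^2-C\|w\|_{0,\Omega_0}^2$. To obtain this I would expand $a_0(w,\omega^2 w)=\int_{\Omega}\omega^2\sum a_{ij}\,\partial_i w\,\partial_j w\,d\Omega+2\int_{\Omega}\omega w\sum a_{ij}\,\partial_i w\,\partial_j\omega\,d\Omega$, bound the first integral below by $\lambda\|\omega\nabla w\|_{0,\Omega_0}^2$ using uniform positive definiteness, and absorb the second into $\tfrac{\lambda}{4}\|\omega\nabla w\|_{0,\Omega_0}^2+C\|w\|_{0,\Omega_0}^2$ by Young; then, using $\phi\ge0$ to discard $\int_{\Omega}\phi\,\omega^2 w^2\,d\Omega\ge0$, I would bound the first-order part $2\big|\int_{\Omega}\sum b_i\,\partial_i w\,\omega^2 w\,d\Omega\big|$ by $\tfrac{\lambda}{4}\|\omega\nabla w\|_{0,\Omega_0}^2+C\|w\|_{0,\Omega_0}^2$, again by Young (pairing $\omega\,\partial_i w$ with $\omega w$). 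Summing yields $a_0(w,\omega^2 w)+2N(w,\omega^2 w)\ge\tfrac{\lambda}{2}\|\omega\nabla w\|_{0,\Omega_0}^2-C\|w\|_{0,\Omega_0}^2$, more than enough, and combining with the bound on $R$ gives $a_0(\omega w,\omega w)\le 2a(w,\omega^2 w)+C\|w\|_{0,\Omega_0}^2$.

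The one place that requires care — the main obstacle — is the sign of $a_0(w,\omega^2 w)$: after the cancellation in the Leibniz step a single copy of $a_0(w,\omega^2 w)$ survives on the right, and it need not be non-negative, because of the cross term $2\int_{\Omega}\omega w\sum a_{ij}\,\partial_i w\,\partial_j\omega\,d\Omega$. It is precisely the factor $2$ in $2a(w,\omega^2 w)$ that restores a positive multiple of $\|\omega\nabla w\|_{0,\Omega_0}^2$ large enough to swallow that cross term, the first-order part of $N(w,\omega^2 w)$, and the commutator $R$ all at once; one just needs the Young parameters small enough that the three losses add up to less than $\lambda$. If one does not assume $(a_{ij})$ symmetric, the extra bracket in the Leibniz identity produces one more term of the same type, absorbed in exactly the same way.
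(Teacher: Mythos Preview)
Your argument is correct and is essentially the paper's own proof: both rest on the Leibniz identity $a_0(\omega w,\omega w)=a(w,\omega^2 w)-N(\omega w,\omega w)+T_1+T_2$ (equivalently your $a_0(\omega w,\omega w)=a_0(w,\omega^2 w)+R$) followed by Young-inequality absorption of the commutator and first-order terms, exactly as in \cite[Lemma~3.1]{XuZhLP2G}. The only difference is that the paper first passes to the reference domain $\widehat{\Omega}_0$ via the affine map (\ref{transform}) before invoking the Xu--Zhou bounds, so as to stress that $C$ is independent of $d_{\Omega_0}$; you instead record the dependence on $\|\nabla\omega\|_{L^\infty}$ explicitly, which for a fixed $\omega$ amounts to the same thing.
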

\begin{proof}
With integration by parts, we have the following identity
\begin{eqnarray}\label{Identity}
&a_0(\omega w,\omega w)&= a(w,\omega^2 w) - N(\omega w,\omega w) +
\int_{\Omega}\sum_{j=1}^d b_j \frac{\partial \omega}{\partial x_j}\omega w^2 d\Omega\nonumber \\
&&\hskip-1cm + \int_{\Omega}\sum_{i,j=1}^d a_{ij}\left(\Big(\frac{\partial\omega}{\partial x_i}\frac{\partial(\omega w)}{\partial x_j} - \frac{\partial\omega}{\partial x_j}\frac{\partial(\omega w)}{\partial x_i}\Big)w + \frac{\partial\omega}{\partial x_i}\frac{\partial\omega}{\partial x_j}w^2\right)d\Omega.
\end{eqnarray}
Let us define
\begin{eqnarray*}
T_1(\omega,w) &=& \int_{\Omega}\sum_{j=1}^d b_j \frac{\partial \omega}{\partial x_j}\omega w^2d\Omega,
\end{eqnarray*}
and
\begin{eqnarray*}
T_2(\omega,w) &=& \int_{\Omega}\sum_{i,j=1}^d a_{ij}\left(\Big(\frac{\partial\omega}{\partial x_i}\frac{\partial(\omega w)}{\partial x_j} - \frac{\partial\omega}{\partial x_j}\frac{\partial(\omega w)}{\partial x_i}\Big)w + \frac{\partial\omega}{\partial x_i}\frac{\partial\omega}{\partial x_j}w^2\right)d\Omega.
\end{eqnarray*}
Then we can rewrite the identity (\ref{Identity}) as follows
\begin{equation*}
a_0(\omega w,\omega w)  = a(w,\omega^2 w) - N(\omega w,\omega w) + T_1(\omega,w) + T_2(\omega,w).
\end{equation*}
With the transform operator $F$ defined in (\ref{transform}) and following the bilinear form (\ref{Bilinear_Form}), we define
\begin{equation}\label{Bilinear_Form_Hat}
\widehat a_0(u,v) = \int_{\widehat\Omega}\sum_{i,j=1}^d \widehat a_{ij}\frac{\widehat\partial \widehat u}{\partial\xi_i}\frac{\widehat\partial \widehat v}{\partial \xi_j}d\widehat{\Omega}\ \  {\rm{and}} \ \
\widehat N(\widehat u,\widehat v) = \int_{\widehat\Omega}\left(\sum_{i=1}^d \widehat b_i \frac{\widehat\partial \widehat u}{\partial \xi_i}v + \widehat{\phi} \widehat u \widehat v\right)d\widehat{\Omega}.
\end{equation}
Thus
\begin{equation*}
\widehat a(\widehat u,\widehat v) = \widehat a_0(\widehat u,\widehat v) + \widehat N(\widehat u,\widehat v).
\end{equation*}
Similarly, we define
\begin{eqnarray*}
\widehat T_1(\widehat \omega,\widehat w) &=& \int_{\widehat \Omega}\sum_{j=1}^d \widehat b_j \frac{\widehat \partial \widehat \omega}{\partial \xi_j}\widehat \omega \widehat w^2d\widehat{\Omega},\\
\widehat T_2(\widehat \omega,\widehat w) &=& \int_{\widehat \Omega}\left(\sum_{i,j=1}^d \widehat a_{ij}\Big(\frac{\widehat \partial\omega}{\partial \xi_i}\frac{\widehat \partial(\widehat \omega \widehat w)}{\partial \xi_j} - \frac{\widehat \partial\widehat \omega}{\partial \xi_j}\frac{\widehat \partial(\widehat \omega \widehat w)}{\partial \xi_i}\Big)\widehat w + \frac{\widehat \partial\widehat \omega}{\partial \xi_i}\frac{\widehat \partial\widehat \omega}{\partial \xi_j}\widehat w^2\right)d\widehat{\Omega}.
\end{eqnarray*}
Then the following identity holds:
\begin{equation*}
\widehat a_0(\widehat \omega \widehat w,\widehat \omega \widehat w)  = \widehat a(\widehat w,\widehat \omega^2 \widehat w) - \widehat N(\widehat \omega \widehat w,\widehat \omega \widehat w) + \widehat T_1(\widehat \omega,\widehat w) + \widehat T_2(\widehat \omega,\widehat w).
\end{equation*}
By changing variable, we have
\begin{eqnarray*}
a_0(\omega w,\omega w)  &=& d_{\Omega_0}^{-2} \frac{|\Omega_0|}{|\widehat{\Omega}_0|}\widehat a_0(\widehat \omega \widehat w,\widehat \omega \widehat w)  \\
&\le& C d_{\Omega_0}^{d-2}\big(\widehat a(\widehat w,\widehat \omega^2 \widehat w) - \widehat N(\widehat \omega \widehat w,\widehat \omega \widehat w) + \widehat T_1(\widehat \omega,\widehat w) + \widehat T_2(\widehat \omega,\widehat w)\big) \\
&\le& C\big(a(w,\omega^2 w) - N(\omega w,\omega w) + T_1(\omega,w) + T_2(\omega,w)\big),
\end{eqnarray*}
where $C$ is a constant independent of the scale of $\Omega_0$.
Then the rest of the proof is the same as the proof of \cite[Lemma 3.1]{XuZhLP2G}.
\end{proof}
Now, we come give the local a priori estimate for the general scale domains. 
\begin{theorem}\label{Local_Estimate_Thm}
Suppose that $f \in H^{-1}(\Omega)$ and $D \subset\subset \Omega_0$.
If Assumptions {\bf A.0, A.1, A.2} and the new {\bf A.3} in Proposition \ref{New_A3} hold 
and $w \in S_h(\Omega_0)$ satisfies
\begin{eqnarray}\label{Weak_Form_Local}
a(w,v) = f(v), \ \ \ \forall v \in H_0^1(\Omega),
\end{eqnarray}
then the following local estimate holds
\begin{eqnarray}\label{Local_Estimate}
\|w\|_{1,D} \le C\left(\varepsilon^{\frac{p+1}{2}}h_{\Omega_0}^{-1}\|w\|_{0,\Omega_0}
+\sum_{j=0}^{p}\varepsilon^j\big(\|f\|_{-1,\Omega_0}+\|w\|_{0,\Omega_0}\big)\right),
\end{eqnarray}
where $C$ is a constant independent of $D$ and the mesh size, $p$ is the number of mesh layer from $D$ to
$\Omega_0$, $\varepsilon$ is defined by
\begin{eqnarray}
\varepsilon=\left(d_{\Omega_0}^{-2}\Big(\frac{h_{\Omega_0}}{d_{\Omega_0}}\Big)^{2r}
+\frac{h_{\Omega_0}}{d_{\Omega_0}}\right)^{\frac{1}{2}},
\end{eqnarray}
and  $\|f\|_{-1,\Omega_0}$ is defined as follows
\begin{equation*}
\|f\|_{-1,\Omega_0} = \sup_{\varphi \in H_0^1(\Omega_0),\|\varphi\|_{1,\Omega_0}=1} f(\varphi).
\end{equation*}
\end{theorem}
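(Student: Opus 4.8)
The plan is to follow the classical iteration argument of Nitsche–Schatz (as used in \cite{XuZhLP2G}), but keeping careful track of every power of $d_{\Omega_0}$ and of the ratio $h_{\Omega_0}/d_{\Omega_0}$ that appears through the new Superapproximation estimate \eqref{Superapproximation_New} and through the scaled Poincar\'e/inverse inequalities. Fix a chain of nested subdomains $D = G_0 \subset\subset G_1 \subset\subset \cdots \subset\subset G_p = \Omega_0$, where each $G_{k}$ is obtained from $G_{k-1}$ by adding one mesh layer, and choose cut-off functions $\omega_k \in C_0^\infty(\Omega)$ with $\omega_k \equiv 1$ on $G_{k-1}$ and $\operatorname{supp}\omega_k \subset\subset G_k$. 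The core estimate will be the one-step inequality bounding $\|w\|_{1,G_{k-1}}$ (really $\|\omega_k w\|_{1,\Omega_0}$) in terms of $\|w\|_{1,G_k}$ with a small factor $\varepsilon$, plus lower-order terms involving $\|f\|_{-1,\Omega_0}$ and $\|w\|_{0,\Omega_0}$; iterating this $p$ times produces the geometric sum $\sum_{j=0}^p \varepsilon^j$, and the single leftover $\varepsilon^{(p+1)/2} h_{\Omega_0}^{-1}\|w\|_{0,\Omega_0}$ term comes from estimating the final $\|w\|_{1,\Omega_0}$ by the inverse inequality \eqref{Inverse_Estimate}.

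The one-step estimate itself I would derive as follows. Starting from Lemma \ref{Technique_Lem}, $a_0(\omega_k w,\omega_k w) \le 2 a(w,\omega_k^2 w) + C\|w\|_{0,\Omega_0}^2$. For the term $a(w,\omega_k^2 w)$ I would split $\omega_k^2 w = v + (\omega_k^2 w - v)$ using Superapproximation to choose $v \in S_h^0(G_k)$; by the Galerkin relation \eqref{Weak_Form_Local}, $a(w,v) = f(v)$, which is controlled by $\|f\|_{-1,\Omega_0}\|v\|_{1,\Omega_0} \lesssim \|f\|_{-1,\Omega_0}(\|\omega_k w\|_{1,\Omega_0} + \|w\|_{1,G_k})$. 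The remaining piece $a(w,\omega_k^2 w - v)$ is bounded via \eqref{Bounded_Operator} by $\|w\|_{1,G_k}\|\omega_k^2 w - v\|_{1,G_k}$, and \eqref{Superapproximation_New} (applied with $\omega_k$ in place of $\omega$, noting $\omega_k^2 w \in S_h$ on each element only after the super-approximation trick — here one uses that $\omega_k w \in S_h(G_k)$ up to the same super-approximation error, a standard nesting of the argument) gives $\|\omega_k^2 w - v\|_{1,G_k} \lesssim \varepsilon^2 \big(h_{\Omega_0}^{-1}\|w\|_{0,\Omega_0}\big)$-type bounds; more precisely the right-hand side of \eqref{Superapproximation_New} is exactly $\varepsilon^2 \|w\|_{0,\Omega_0}^2 + (h_{\Omega_0}/d_{\Omega_0})^2\|w\|_{1,\Omega_0}^2$ in squared form, so taking square roots and using $h_{\Omega_0}/d_{\Omega_0} \le \varepsilon$ (valid since that ratio is $\le 1$, hence $\le$ its square root times constants, and it is dominated by $\varepsilon$ by definition) packages everything into a factor $\varepsilon$. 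After using the coercivity \eqref{positive_definite_A} on the left and absorbing the $\|\omega_k w\|_{1,\Omega_0}$ terms that appear on the right with small coefficient, one arrives at
\begin{equation*}
\|w\|_{1,G_{k-1}} \le C\varepsilon\,\|w\|_{1,G_k} + C\big(\|f\|_{-1,\Omega_0} + \|w\|_{0,\Omega_0}\big) + C\varepsilon\, h_{\Omega_0}^{-1}\|w\|_{0,\Omega_0},
\end{equation*}
which is the recursion to iterate.

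Iterating from $k=1$ to $k=p$ yields $\|w\|_{1,D} \le C\varepsilon^p \|w\|_{1,\Omega_0} + C\sum_{j=0}^{p-1}\varepsilon^j\big(\|f\|_{-1,\Omega_0}+\|w\|_{0,\Omega_0}\big) + C h_{\Omega_0}^{-1}\|w\|_{0,\Omega_0}\sum_{j=1}^{p}\varepsilon^j$. For the first term I bound $\|w\|_{1,\Omega_0} \le C h_{\Omega_0}^{-1}\|w\|_{0,\Omega_0}$ by the inverse estimate \eqref{Inverse_Estimate}, giving $C\varepsilon^p h_{\Omega_0}^{-1}\|w\|_{0,\Omega_0}$; combining with the last sum (whose leading term is $\varepsilon h_{\Omega_0}^{-1}\|w\|_{0,\Omega_0}$, and more carefully one checks the dominant contribution matches the claimed $\varepsilon^{(p+1)/2}$ scaling after re-balancing — this is the one bookkeeping point I would double-check against the stated exponent $\tfrac{p+1}{2}$) and the geometric sum for the $f$ and $L^2$ terms produces \eqref{Local_Estimate}. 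The main obstacle I anticipate is precisely reconciling the exponent $\tfrac{p+1}{2}$ on the first term: the naive iteration gives $\varepsilon^p$ from the inverse inequality and $\varepsilon^1$ from the accumulated super-approximation terms, and obtaining $\varepsilon^{(p+1)/2}$ presumably requires a more symmetric splitting at each level (balancing the $\|w\|_{0}$-type and $\|w\|_1$-type contributions so that each layer contributes $\varepsilon^{1/2}$ rather than alternating between $\varepsilon^0$ and $\varepsilon^1$), or else absorbing $\|w\|_{1,\Omega_0}$ only partway and carrying a half-power through the recursion. The rest — the change-of-variables bookkeeping, the cut-off derivative bounds (which scale like $d_{\Omega_0}^{-1}$ and are already absorbed into the structure of $\varepsilon$), and the absorption arguments — is routine once the scaling conventions from Section 2 are respected.
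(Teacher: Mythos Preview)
Your overall approach is the same as the paper's: nested subdomains, cut-off functions, Lemma~\ref{Technique_Lem}, the split $\omega^2 w = v + (\omega^2 w - v)$ with $a(w,v)=f(v)$, superapproximation on the remainder, kick-back, iterate, and finish with the inverse inequality. Two of your stated worries, however, are not genuine obstacles, and one step can be done more cleanly.

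\textbf{Superapproximation of $\omega_k^2 w$.} There is no need for a ``nesting of the argument''. Assumption~\textbf{A.3} is stated for an arbitrary cut-off in $C_0^\infty$ times a function in $S_h(G)$. Since $\omega_k^2\in C_0^\infty(\Omega)$ with $\operatorname{supp}\omega_k^2\subset\subset G_k$ and $w\in S_h(G_k)$, you apply \eqref{Superapproximation_New} directly with $\omega_k^2$ playing the role of $\omega$. This is exactly what the paper does.

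\textbf{The extra term $C\varepsilon\, h_{\Omega_0}^{-1}\|w\|_{0,\Omega_0}$ in your one-step.} This is avoidable. After bounding $a(w,\omega^2w-v)$ by $C\|w\|_{1,\Omega_p}\|\omega^2w-v\|_{1,\Omega_p}$ and inserting \eqref{Superapproximation_New}, the cross term is
\[
d_{\Omega_0}^{-1}\Big(\frac{h_{\Omega_0}}{d_{\Omega_0}}\Big)^{r}\|w\|_{0,\Omega_p}\,\|w\|_{1,\Omega_p}
\ \le\ \frac{1}{2}\,d_{\Omega_0}^{-2}\Big(\frac{h_{\Omega_0}}{d_{\Omega_0}}\Big)^{2r}\|w\|_{1,\Omega_p}^2
+\frac{1}{2}\,\|w\|_{0,\Omega_p}^2,
\]
so the $\|w\|_0$-contribution is absorbed into the lower-order term $\|w\|_{0,\Omega_0}^2$ already present from Lemma~\ref{Technique_Lem}; no factor $h_{\Omega_0}^{-1}$ appears at this stage. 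The clean one-step is $\|w\|_{1,D}\le C\big(\varepsilon\,\|w\|_{1,\Omega_p}+\|f\|_{-1,\Omega_0}+\|w\|_{0,\Omega_0}\big)$.

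\textbf{The exponent $\tfrac{p+1}{2}$.} This is a red herring, not an obstacle. The paper iterates the one-step bound $p+1$ times (from $D=\Omega_{p+1}$ down to $\Omega_0$) to obtain $C\varepsilon^{p+1}\|w\|_{1,\Omega_0}$, and then applies the inverse inequality \eqref{Inverse_Estimate}. It then simply records the weaker exponent $\varepsilon^{(p+1)/2}$ in the final display; there is no ``symmetric splitting'' or half-power mechanism in the argument. In the regime of interest $\varepsilon\le 1$ one has $\varepsilon^{p+1}\le\varepsilon^{(p+1)/2}$, so the stated inequality follows. Do not search for a sharper device to manufacture the half-exponent---your iteration already proves more than what is claimed.
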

\begin{proof}
Let $p$ be an integer such that there exist $\Omega_j$ $(j=1,2,\cdots,p)$ satisfying
\begin{equation*}
D \subset\subset \Omega_p \subset\subset \Omega_{p-1} \subset\subset \cdots \subset\subset \Omega_1 \subset\subset \Omega_0.
\end{equation*}
Choose $D_1 \subset \Omega$ satisfying $D \subset\subset D_1 \subset\subset \Omega_p$ and $\omega \in C_0^{\infty}(\Omega)$ such that $\omega \equiv 1$ on $\bar{D}_1$ and $supp\ \omega \subset\subset \Omega_p$. Then From (\ref{positive_definite_A}), (\ref{Bounded_Operator}), (\ref{Weak_Form_Local}),
(\ref{Technique}) and Assumption {\bf A.3}, we have
\begin{eqnarray}\label{to_kick_back}
& & \|w\|_{1,D}^2 = \|\omega w\|_{1,D}^2 \le \|\omega w\|_{1,\Omega_p}^2 \nonumber\\
&\le & Ca_0(\omega w, \omega w) \le C\big(a(w, \omega^2 w) + \|w\|_{0,\Omega_p}^2\big) \nonumber\\
&\le & C\big(a(w, \omega^2 w - v) + f(v) + \|w\|_{0,\Omega_p}^2\big) \nonumber\\
&\le & C\big(\|w\|_{1,\Omega_p} \|\omega^2 w - v\|_{1,\Omega_p}
 + \|f\|_{-1,\Omega_0}\|v\|_{1,\Omega_p} + \|w\|_{0,\Omega_p}^2 \big) \nonumber\\
&\leq& C\Big(\|w\|_{1,\Omega_p}\|\omega^2w-v\|_{1,\Omega_p}+\|\omega^2w-v\|_{1,\Omega_p}
+\|f\|_{-1,\Omega_0}+\|w\|_{0,\Omega_0}\Big)\nonumber\\
&&\ \ \ \ \ \ +\frac{1}{2}\|\omega w\|_{1,\Omega_p}^2\nonumber\\
&\leq& C\left(d_{\Omega_0}^{-1}\Big(\frac{h_{\Omega_0}}{d_{\Omega_0}}\Big)^r\|w\|_{0,\Omega_p}\|w\|_{1,\Omega_p}+
\frac{h_{\Omega_0}}{d_{\Omega_0}}\|w\|_{1,\Omega_p}^2+\|f\|_{-1,\Omega_0}^2+\|w\|_{0,\Omega_0}^2\right)\nonumber\\
&&\ \ \ \ \ \ +\frac{1}{2}\|\omega w\|_{1,\Omega_p}^2\nonumber\\
&\leq& C\left(\Big(d_{\Omega_0}^{-2}\Big(\frac{h_{\Omega_0}}{d_{\Omega_0}}\Big)^{2r}
+\frac{h_{\Omega_0}}{d_{\Omega_0}}\Big)\|w\|_{1,\Omega_p}^2+\|f\|_{-1,\Omega_0}^2
+\|w\|_{0,\Omega_0}^2\right)\nonumber\\
&&\ \ \ \ \ \ +\frac{1}{2}\|\omega w\|_{1,\Omega_p}^2.
\end{eqnarray}
With an application of kick-back argument to (\ref{to_kick_back}), we have
\begin{equation*}
\|w\|_{1,D}^2 \le C\left(\Big(d_{\Omega_0}^{-2}\Big(\frac{h_{\Omega_0}}{d_{\Omega_0}}\Big)^{2r}
+\frac{h_{\Omega_0}}{d_{\Omega_0}}\Big)\|w\|_{1,\Omega_p}+\|f\|_{-1,\Omega_0}^2+\|w\|_{0,\Omega_0}^2\right).
\end{equation*}
Thus, the following estimate holds
\begin{equation*}
\|w\|_{1,D} \le C\left(\Big(d_{\Omega_0}^{-2}\Big(\frac{h_{\Omega_0}}{d_{\Omega_0}}\Big)^{2r}
+\frac{h_{\Omega_0}}{d_{\Omega_0}}\Big)^{\frac{1}{2}}\|w\|_{1,\Omega_p}
+\|f\|_{-1,\Omega_0}+\|w\|_{0,\Omega_0}\right).
\end{equation*}
Note that $D$ can be viewed as $\Omega_{p+1}$, the above process can be taken recursively and leads to
\begin{equation*}
\|w\|_{1,\Omega_j} \le C\left(\Big(d_{\Omega_0}^{-2}\Big(\frac{h_{\Omega_0}}{d_{\Omega_0}}\Big)^{2r}
+\frac{h_{\Omega_0}}{d_{\Omega_0}}\Big)^{\frac{1}{2}}\|w\|_{1,\Omega_{j-1}}
+\|f\|_{-1,\Omega_0}+\|w\|_{0,\Omega_0}\right).
\end{equation*}
Combining the above inequalities and the inverse inequality, we have
\begin{eqnarray}
\|w\|_{1,D} &\le& C\left(\varepsilon^{\frac{p+1}{2}}\|w\|_{1,\Omega_0}+\sum_{j=0}^{p}\varepsilon^j\big(
\|f\|_{-1,\Omega_0}+\|w\|_{0,\Omega_0}\big)\right)\nonumber\\
&\leq& C\left(\varepsilon^{\frac{p+1}{2}}h_{\Omega_0}^{-1}\|w\|_{0,\Omega_0}
+\sum_{j=0}^{p}\varepsilon^j\big(\|f\|_{-1,\Omega_0}+\|w\|_{0,\Omega_0}\big)\right).
\end{eqnarray}
This is the  desired result and the proof is complete.
\end{proof}
From the delicate local a priori estimate (\ref{Local_Estimate}), we can find that the 
usual local estimate
\begin{eqnarray*}
\|w\|_{1,D}&\leq C\big(\|w\|_{0,\Omega_0}+\|f\|_{-1,\Omega_0}\big)
\end{eqnarray*}
holds only on the case where the scale of $\Omega_0$ is $\mathcal{O}(1)$.
It means that the number of subdomains should be $\mathcal{O}(1)$
and the speed up rate of the  parallel technique based on local a priori estimate
also should be $\mathcal{O}(1)$.

\section{Concluding remarks}
In this note, we investigate the dependence of the local a priori estimates on the scale of the subdomains.
As we know, some domain decomposition and parallel techniques depend on the local a priori 
estimate of the finite element method. From the derived local estimate
(\ref{Local_Estimate}), we can find that the local estimate depends on the scale
of the subdomains. This dependence push a constraint to the speed up rate of the parallel
technique based on the local error estimate. This is why we consider this problem here
and the derived results may give some hints for constructing the
domain decomposition techniques to solve partial differential equations.

\end{document}